\newtheorem{thmA}{Theorem}
\newtheorem{theorem}{Theorem}[section]
\newtheorem{proposition}[theorem]{Proposition}
\newtheorem{lemma}[theorem]{Lemma}
\newtheorem{question}[theorem]{Question}
\newtheorem*{claim*}{Claim}
\theoremstyle{remark}
\newtheorem{remark}[theorem]{Remark}
\theoremstyle{definition}
\newtheorem{definition}[theorem]{Definition}
\def\calg{\mathcal{G}}
\def\calf{\mathcal{F}}
\def\G{\Gamma}
\def\stab{{\rm{Stab}}}
\def\aut{{\rm{Aut}}}
\def\out{{\rm{Out}}}
\def\gl{{\rm{GL}}}
\def\cd{{\rm{cd}}}
\def\CV{{\rm{CV}}}
\def\<{\langle}
\def\>{\rangle}
\def\pso{{\rm{PSO}}}
\newcommand{\st}{\mathrm{st}}
\newcommand{\lk}{\mathrm{lk}}
\definecolor{olive}{rgb}{0,0.5,0}
\title[Virtual duality and automorphism groups of RAAGs]{A note on virtual duality and automorphism groups of right-angled Artin groups} 
\author{Richard D. Wade \\ \lowercase{\textit{with an appendix by}} Benjamin Br\"uck}
\begin{document}

\begin{abstract}
A theorem of Brady and Meier states that a right-angled Artin group is a duality group if and only if the flag complex of the defining graph is Cohen--Macaulay. We use this to give an example of a RAAG with the property that its outer automorphism group is not a virtual duality group. This gives a partial answer to a question of Vogtmann. In an appendix, Br\"uck describes how he used a computer-assisted search to find further examples. \end{abstract}

\maketitle

\section{Introduction}

The definition of a \emph{duality group} was introduced by Bieri and Eckmann in \cite{BE} in order to describe groups that have a (possibly twisted) pairing between homology and cohomology.  A group $G$ is a \emph{virtual duality group} if some (equivalently, any) finite-index torsion-free subgroup of $G$ is a duality group. By Poincar\'e duality, fundamental groups of closed aspherical manifolds are duality groups. Furthermore, mapping class groups \cite{Harer}, $\gl_n(\mathbb{Z})$ \cite{BorelSerre}, and $\out(F_n)$ \cite{BesF,BSV} are also virtual duality groups for more subtle reasons. As outer automorphism groups of right-angled Artin groups interpolate between $\gl_n(\mathbb{Z})$ and $\out(F_n)$, it is not unreasonable to guess that $\out(A_\G)$ might also be a virtual duality group. The purpose of this note is to show that in general, this is not the case.


%

\begin{thmA}
Let $\G$ be the graph given in Figure~\ref{fig}. Then $\out(A_\G)$ is not a virtual duality group.
\end{thmA}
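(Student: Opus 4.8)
The plan is to deduce the theorem from the Brady--Meier criterion. The aim is to show that, after passing to a torsion-free finite-index subgroup, $\out(A_\G)$ has the shape $N \rtimes A_\Lambda$, where $N$ is poly-$\Z$ (in fact free abelian for our $\G$) and $\Lambda$ is a full subgraph of $\G$ whose flag complex is \emph{not} Cohen--Macaulay. By Brady--Meier, $A_\Lambda$ is then not a duality group, and a collapsing Lyndon--Hochschild--Serre spectral sequence transfers this failure to $N \rtimes A_\Lambda$; since being a virtual duality group is detected on torsion-free finite-index subgroups, it follows that $\out(A_\G)$ is not a virtual duality group.

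In more detail, one first replaces $\out(A_\G)$ by the torsion-free finite-index subgroup $\out^{[3]}(A_\G)$, the image of the level-$3$ congruence subgroup $\aut^{[3]}(A_\G)$; this is harmless because a group is a virtual duality group if and only if one (equivalently, any) torsion-free finite-index subgroup is a duality group. The next step -- the heart of the matter -- is to unwind the structure of $\out(A_\G)$ for the particular graph $\G$ of Figure~\ref{fig}. The graph is engineered so that $\out(A_\G)$ is as rigid as possible (for instance so that there are no transvections, leaving only graph symmetries, inversions and partial conjugations), and so that the resulting partial-conjugation, or Fouxe--Rabinovitch-type, part carries an honest RAAG as a semidirect factor with abelian complement. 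Two natural sources of such splittings are a join decomposition and a free-product decomposition of $A_\G$, combined with the Charney--Vogtmann and Day--Wade structure theory of $\out(A_\G)$ (as assembled from $\GL$-blocks, $\out(F_k)$-blocks, relative automorphism groups of links, and abelian blocks of transvections): for example, if $\G = \{v\}*\Lambda$ with $A_\Lambda$ centreless and $\Lambda$ containing no cone point, then $\out(A_\G) \cong \Z^{|V(\Lambda)|} \rtimes (\out(A_\Lambda) \times \Z/2)$, which already reduces the question from $\G$ to $\Lambda$; iterating such reductions, or combining them with the description of the outer automorphism group of a free product, one arrives at a graph for which the RAAG appears directly as a factor with abelian (or poly-$\Z$) complement. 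Pinning down such a splitting -- extracting a genuine semidirect factor isomorphic to a RAAG, rather than merely a subgroup or a quotient -- is the main obstacle, because the duality property is fragile under passage to subgroups and quotients; this is precisely what dictates the somewhat delicate choice of $\G$.

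For the cohomological transfer, suppose $G = N \rtimes H$ with $N$ poly-$\Z$ of Hirsch length $m$. Since $\Z G$ is free as a left $\Z N$-module and $N$ is a $\mathrm{PD}_m$-group, $H^q(N;\Z G) \cong \bigoplus_H H^q(N;\Z N)$ vanishes for $q \ne m$, and for $q = m$ it is isomorphic, as a $\Z H$-module, to $\Z H$ (the orientation-character twist being absorbed by an isomorphism of $\Z H$ with its twist). Hence the Lyndon--Hochschild--Serre spectral sequence for $1 \to N \to G \to H \to 1$ with coefficients in $\Z G$ collapses at $E_2$ and yields $H^{p+m}(G;\Z G) \cong H^p(H;\Z H)$ for all $p$. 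Therefore $H^*(G;\Z G)$ is concentrated in a single degree with torsion-free value exactly when the same is true for $H^*(H;\Z H)$; in particular, if $H$ is not a duality group then neither is $G$. Applying this with $H = A_\Lambda$ and $N$ the complement found above, and invoking Brady--Meier together with the Jensen--Meier formula for $H^*(A_\Lambda;\Z A_\Lambda)$ -- which shows that non-Cohen--Macaulayness of the flag complex forces this cohomology to be nonzero in at least two degrees (the reduced homology groups of the relevant links being torsion-free, which we arrange) -- we conclude that $N \rtimes A_\Lambda$, and hence the torsion-free finite-index subgroup of $\out(A_\G)$ we started with, is not a duality group.

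Finally, one checks by a direct computation that the flag complex of the subgraph $\Lambda$ read off from Figure~\ref{fig} is not Cohen--Macaulay: for instance, that it is connected while the link of some simplex is disconnected (or, more generally, has reduced homology in a dimension strictly below its own), with no torsion appearing. Combining the steps completes the proof.
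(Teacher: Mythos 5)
Your high-level strategy is the right one (exhibit a torsion-free finite-index subgroup, kill it with Brady--Meier, and use the fact that virtual duality is detected on any torsion-free finite-index subgroup), but the heart of the argument is missing. You write that ``pinning down such a splitting \dots is the main obstacle'' and then do not pin it down: you never determine the actual structure of $\out(A_\G)$ for the graph in Figure~\ref{fig}. That graph is a \emph{disjoint union} $\G_1\sqcup\G_2$, so $A_\G\cong A_{\G_1}\ast A_{\G_2}$ is a free product of two one-ended, centreless RAAGs, each with finite outer automorphism group; the Guirardel--Levitt description of $\out$ of a Grushko decomposition then gives a finite-index subgroup of $\out(A_\G)$ isomorphic to $A_{\G_1}\oplus A_{\G_2}\cong A_{\G_1\star\G_2}$ (the subgroup generated by partial conjugations). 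Without carrying out this step --- which requires verifying, vertex by vertex, that $\out(A_{\G_i})$ is finite via the Charney--Farber/Laurence-type criterion --- there is no subgroup to apply Brady--Meier to, and the theorem is not proved. Your sample reduction (the cone $\G=\{v\}\star\Lambda$ giving $\Z^{|V(\Lambda)|}\rtimes(\out(A_\Lambda)\times\Z/2)$) describes a different family of graphs, not the one in the figure.

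A secondary problem is that your template does not fit the example even once the structure is known. The relevant RAAG is $A_\Delta$ with $\Delta=\G_1\star\G_2$ the \emph{join}, which is not a full subgraph of $\G$; and if you instead try to realise the finite-index subgroup as $N\rtimes A_{\G_2}$, the complement $N=A_{\G_1}$ is a nonabelian RAAG, not poly-$\Z$, so your $\mathrm{PD}_m$ spectral-sequence hypothesis fails. In fact the spectral-sequence machinery (and the Jensen--Meier computation) is unnecessary here: the finite-index subgroup is itself the RAAG $A_\Delta$, so Brady--Meier applies to it directly. The non-Cohen--Macaulayness is also simpler than the link obstruction you propose: $\widehat{\Delta}=\widehat{\G_1}\star\widehat{\G_2}$ is not pure, because a maximal edge on one of the strings of $\G_2$ lies only in simplices of strictly smaller dimension than $\dim\widehat{\Delta}$. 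So the correct skeleton is: verify finiteness of $\out(A_{\G_i})$, invoke the free-product structure to get $A_\Delta$ as a finite-index subgroup, observe $\widehat\Delta$ is not pure, and conclude.
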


\begin{figure}[ht]
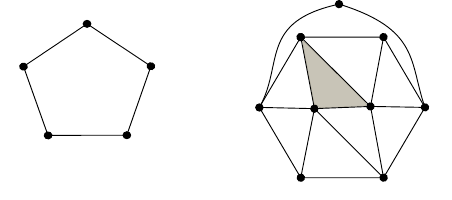
\caption{A graph $\G=\G_1 \sqcup \G_2$ such that $\out(A_\G)$ is not a virtual duality group. The grey triangles are added to show the flag complex $\hat{\G}$ determined by $\G$.}
\label{fig}
\end{figure}

With hindsight, the failure of duality here should not be too surprising, simply because right-angled Artin groups themselves are rarely duality groups. A wonderful theorem of Brady and Meier \cite{BM} shows that a right-angled Artin group $A_\G$ is a duality group if and only if the flag complex $\hat{\Gamma}$ of the defining graph is \emph{Cohen--Macaulay} (see Definition~\ref{d:cm}). To briefly sketch how their result implies Theorem~A, let $\Delta$ be the join of the subgraphs  $\G_1$ and $\G_2$ given in Figure~\ref{fig}. The \emph{Join Lemma}, given below, gives a method for constructing outer automorphism groups of RAAGs with finite-index RAAG subgroups. Its proof follows reasonably quickly from existing results (see Section~\ref{s:join-lemma} for details).

\begin{lemma}[Join Lemma] \label{l:join}
Let $A_{\G_1}$ and $A_{\G_2}$ be two noncyclic right-angled Artin groups with finite outer automorphism groups. If $\G=\G_1 \sqcup \G_2$ is the disjoint union of the two graphs, and $\Delta=\G_1 \star \G_2$ is their join, then $\out(A_\G)$ has a finite-index subgroup isomorphic to the right-angled Artin group $A_{\G_1} \oplus A_{\G_2} \cong A_\Delta$.
\end{lemma}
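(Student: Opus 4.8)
The plan is to identify the asserted finite-index subgroup with the \emph{group of twists} (i.e.\ partial conjugations) of the free product decomposition $A_\G=A_{\G_1}*A_{\G_2}$ coming from $\G=\G_1\sqcup\G_2$, and to deduce both its isomorphism type and the finite index from the structure theory of automorphism groups of free products; the key point is that the relevant quotient of $\out(A_\G)$ is assembled from $\out(A_{\G_1})$ and $\out(A_{\G_2})$, which are finite by hypothesis.

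First I would unwind the hypotheses. By the standard criterion for finiteness of $\out(A_\Gamma)$ — no nontrivial transvections, and no vertex $v$ with $\Gamma\setminus\st(v)$ disconnected — each of $\G_1,\G_2$ must be connected with no cone vertex, since a disconnected defining graph or a cone vertex would produce a partial conjugation, respectively a transvection, of infinite order in $\out$. Hence $A_{\G_1}$ and $A_{\G_2}$ are freely indecomposable, non-cyclic and centreless, and, as $\G$ then has no isolated vertices, $A_\G=A_{\G_1}*A_{\G_2}$ is the Grushko decomposition of $A_\G$, with no free part.

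Next I would invoke the Fouxe-Rabinovich description of the automorphism group of a free product (see also the later treatments of Gilbert, Levitt, and Guirardel--Levitt). For $A_\G=A_{\G_1}*A_{\G_2}$ with both factors freely indecomposable and non-cyclic, it yields a short exact sequence
\[
  1 \longrightarrow \mathcal{T} \longrightarrow \out(A_\G) \stackrel{\rho}{\longrightarrow} \bigl(\out(A_{\G_1})\times\out(A_{\G_2})\bigr)\rtimes S \longrightarrow 1,
\]
where $S\le\Z/2$ records the possible interchange of the two factors (present exactly when $A_{\G_1}\cong A_{\G_2}$) and $\mathcal{T}$ is the group of twists. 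Since $\out(A_{\G_1})$ and $\out(A_{\G_2})$ are finite, the target of $\rho$ is finite, so $\mathcal{T}=\ker\rho$ has finite index in $\out(A_\G)$.

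It remains to identify $\mathcal{T}$. As $\G_1$ and $\G_2$ span no edge of $\G$, for each $a\in A_{\G_1}$ there is an automorphism $c_a$ of $A_\G$ conjugating the factor $A_{\G_2}$ by $a$ and fixing $A_{\G_1}$, and symmetrically automorphisms $d_b$ for $b\in A_{\G_2}$; these generate $\mathcal{T}$. A normal-form computation in $A_{\G_1}*A_{\G_2}$ shows that $a\mapsto[c_a]$ and $b\mapsto[d_b]$ define injective homomorphisms into $\out(A_\G)$ (injectivity using that the centraliser of a free factor is its centre, which is trivial here), that their images commute in $\out(A_\G)$ since $c_ad_b$ and $d_bc_a$ differ by the inner automorphism given by $[a,b]$, and that the two images intersect trivially (using $A_{\G_1}\cap A_{\G_2}=\{1\}$). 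Hence $\mathcal{T}\cong A_{\G_1}\times A_{\G_2}=A_{\G_1}\oplus A_{\G_2}\cong A_\Delta$, the last isomorphism because $\Delta=\G_1\star\G_2$ is a join; with the previous paragraph, this proves the lemma. The step I expect to be the real content — and where one genuinely leans on existing results — is the exactness of the displayed sequence, i.e.\ that $\ker\rho$ is exactly the subgroup generated by the partial conjugations $c_a,d_b$, with nothing extra; the subsidiary point to watch is the absence of isolated vertices in $\G_1$ and $\G_2$, which is what excludes a free factor from the Grushko decomposition of $A_\G$ and hence rules out additional transvection-type automorphisms lying outside $\mathcal{T}$.
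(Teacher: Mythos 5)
Your proof is correct and follows essentially the same route as the paper: both arguments first use the finiteness of $\out(A_{\G_i})$ to deduce that each $\G_i$ is connected and $A_{\G_i}$ is one-ended (freely indecomposable) with trivial centre, and then appeal to the structure theory of outer automorphism groups of free products (Guirardel--Levitt, in the Fouxe-Rabinovitch tradition) to exhibit the group of partial conjugations $A_{\G_1}/Z(A_{\G_1})\oplus A_{\G_2}/Z(A_{\G_2})\cong A_{\G_1}\oplus A_{\G_2}$ as the kernel of a map onto a finite group. The paper packages this as a standalone proposition on $\out(A\ast B)$ for one-ended factors and cites the exact sequence coming from the stabilizer of the (single-point) relative outer space, which is precisely the exactness step you correctly flag as the real content to be outsourced.
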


The construction of the graph $\G$ in Figure~\ref{fig} ensures that both $A_{\G_1}$ and $A_{\G_2}$ have finite outer automorphism groups, so that $\out(A_\G)$ has a finite-index subgroup isomorphic to $A_{\G_1} \oplus A_{\G_2} \cong A_\Delta$. However, the flag complex $\hat{\Delta}$ is not Cohen--Macaulay, so neither $A_\Delta$ nor $\out(A_\G)$ is a (virtual) duality group. Full details are given in Section~\ref{s:proof}. More generally, the Join Lemma gives a way of constructing many examples of RAAGs $A_\Delta$ appearing as finite-index subgroups of $\out(A_\G)$ for certain $\G$.

 Theorem~A gives a very partial answer to Question~3 from Vogtmann's \emph{Groups St. Andrews} lecture notes on automorphism groups of RAAGs \cite{Vogtmann}. We discuss this in more detail and highlight related questions in Section 3. After circulating a draft of this paper, Br\"uck used a computer search to find further examples of RAAGs satisfying the conclusion of Theorem~A. He kindly agreed to describe these examples and his approach in an appendix. More recently, Wiedmer  \cite{Wiedmer2022}  built on these ideas to prove the remarkable result that \emph{every} right-angled Artin group $A_\G$ is commensurable with $\out(A_\Delta)$ for some other RAAG $A_\Delta$. This gives a vast range of examples of RAAGs whose outer automorphism groups are not duality groups. 
 
\paragraph*{\textbf{Acknowledgments.}} We thank the referee for helpful comments and Corey Bregman for feedback on an earlier draft of this paper, particularly Remark~\ref{r:Corey}. Wade is funded by the Royal Society through a University Research Fellowship.

\section{Proof of Theorem A} \label{s:proof}

In this section we provide background, definitions, and expand on the sketch proof given in the introduction to give a full proof of Theorem~A.

\subsection{Right-angled Artin groups}

Let $\G$ be a finite graph with vertex set $V(\G)$. The \emph{right-angled Artin group determined by the graph $\G$} is the finitely presented group $A_\G$ with the presentation:
\[ A_\G= \langle v \in V(\G) \, | \,  vw=wv \text{ if $v$ and $w$ span an edge in $\G$.} \rangle \]
We will always assume subgraphs $\Delta \subset \Gamma$ are \emph{full}, so that two vertices in $\Delta$ are connected by an edge if and only if they are connected by an edge in $\G$. We will also assume that our graphs are \emph{simple}, so that there are no loops and no double edges in $\G$ (this is safe to do as relations given by loops or double edges do not change the group obtained from the above presentation). We make use of the following facts:
\begin{itemize}
\item The centre of a right-angled Artin group $A_\G$ is generated by the vertices $v$ that are adjacent to every other vertex.
\item A RAAG $A_\G$ is one-ended if and only if $\G$ is connected and contains at least two vertices (\cite{GH} proves something much stronger than this - probably the simplest way to show this directly is by using \cite[Lemma~1.1]{GH}).
\end{itemize}

For introductions to RAAGs and their automorphisms, we recommend the survey papers of Charney \cite{Charney} and Vogtmann \cite{Vogtmann}.


\subsection{Flag complexes, the Cohen--Macaulay condition, and duality for RAAGs.}

For a simple graph $\G$, we use $\hat{\G}$ to denote the \emph{flag complex determined by $\G$}.   One can define $\hat{\G}$ as being obtained from $\G$ by filling in any `visible' simplices, or as the the largest simplicial complex on the vertex set $V(\G)$ with the same edge set as $\G$. The \emph{star} of a simplex $\sigma$ is the subcomplex spanned by simplices containing $\sigma$, and the link of $\sigma$ is the subcomplex consisting of simplices $\tau \in \st(\sigma)$ with $\sigma \cap \tau = \emptyset$.

\begin{definition}[Cohen--Macaulay complexes] \label{d:cm}
A finite simplicial complex $X$ is \emph{Cohen--Macaulay of dimension  $n$} if 

\begin{itemize}
\item the reduced homology $\overline{H}_*(X ;\mathbb{Z} )$ is free abelian (possibly trivial) and concentrated in degree $n$, 
\item each maximal simplex is $n$-dimensional, and
\item for each non-maximal $k$-simplex $\sigma$, the reduced homology $\overline{H}_*(\lk(\sigma);\mathbb{Z})$ is free abelian (possibly trivial) and concentrated in degree $n-k-1$.  
\end{itemize}
\end{definition}

A group $G$ is a \emph{duality group of dimension $n$} if there exists a $G$--module $D$ and an element $e \in H_n(G;D)$ such that the cap product with $e$ induces an isomorphism \[H^{n-k}(G; M) \cong H_k(G; D \otimes M) \] for all $k$ and all $G$--modules $M$. If we can take $D=\mathbb{Z}$ in the above then $G$ is a \emph{Poincar\'e duality group} (Bieri and Eckmann allow a nontrivial action on $\mathbb{Z}$ in this definition). We do not work with the definition in this note, instead relying on the following theorem of Brady and Meier.

\begin{theorem} [\cite{BM}, Theorem~C]
Let $\G$ be a finite simple graph. The right-angled Artin group $A_\G$ is a duality group if and only if the flag complex $\hat{\G}$ is Cohen--Macaulay.
\end{theorem}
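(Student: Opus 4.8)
The plan is to translate the duality property into a vanishing-and-torsion statement for $H^*(A_\G;\mathbb Z A_\G)$, to compute that cohomology from the Salvetti complex, and to observe that the answer is assembled from exactly the (co)homology groups appearing in Definition~\ref{d:cm}. First I would reduce to a cohomological criterion. The Salvetti complex $S_\G$, which has one $k$-cube for each clique on $k$ vertices of $\G$, is a compact $K(A_\G,1)$; hence $A_\G$ is of type $\mathrm F$ and $\cd(A_\G)=\dim\hat\G+1=:n$, the lower bound coming from the copy of $\mathbb Z^n$ spanned by a maximum clique. By the homological criterion of Bieri and Eckmann \cite{BE}, a group of type $\mathrm{FP}$ with $\cd=n$ is a duality group — necessarily of dimension $n$ — if and only if $H^k(A_\G;\mathbb Z A_\G)=0$ for $k<n$ and $H^n(A_\G;\mathbb Z A_\G)$ is torsion-free as an abelian group. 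Since $X:=\widetilde{S_\G}$ is a contractible CAT(0) cube complex on which $A_\G$ acts freely and cocompactly, $H^*(A_\G;\mathbb Z A_\G)\cong H^*_c(X;\mathbb Z)$, so everything reduces to the compactly supported cohomology of $X$.

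Next I would compute $H^*_c(X)$ combinatorially. Exhausting $X$ by the combinatorial balls $B_r$ around a base vertex and using contractibility, $H^k_c(X)\cong\varinjlim_r\overline H^{\,k-1}(X\ssm B_r)$ (the colimit causes no $\varprojlim^1$ difficulty, as the $B_r$ are finite). The link of every vertex of $X$ is the octahedralisation $\calo\hat\G$ of $\hat\G$, and $X\ssm B_r$ deformation retracts onto a union of ``sectors at infinity'' modelled on subcomplexes of these vertex links. Running this filtration through a combinatorial Morse theory / Mayer--Vietoris spectral sequence argument — the machinery that computes $H^*_c$ of Davis complexes of Coxeter groups and of Bestvina--Brady kernels — one obtains a direct sum decomposition indexed by the simplices $\sigma$ of $\hat\G$ together with the empty simplex: writing $|\sigma|$ for the number of vertices of $\sigma$, $A_\emptyset=1$, $\lk_{\hat\G}(\emptyset)=\hat\G$, and $\overline H^{-1}(\emptyset;\mathbb Z)=\mathbb Z$,
\[
H^k(A_\G;\mathbb Z A_\G)\;\cong\;\bigoplus_{\sigma}\;\overline H^{\,k-1-|\sigma|}\bigl(\lk_{\hat\G}(\sigma)\bigr)\otimes_{\mathbb Z}\mathbb Z[A_\G/A_\sigma].
\]
For what follows the precise coefficient module is irrelevant; what matters is that each summand is a shifted reduced cohomology group of a link, tensored with the \emph{nonzero, free} abelian group $\mathbb Z[A_\G/A_\sigma]$.

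Finally I would match this with the Cohen--Macaulay conditions. Since $\mathbb Z[A_\G/A_\sigma]$ is nonzero and free, the $\sigma$-summand vanishes iff $\overline H^{\,k-1-|\sigma|}(\lk_{\hat\G}\sigma)$ vanishes, and it is torsion-free iff that link cohomology is. Going through the three kinds of $\sigma$, with $n=\dim\hat\G+1$: the term $\sigma=\emptyset$ contributes $\overline H^{\,k-1}(\hat\G)$, so $H^{<n}$ vanishes and $H^n$ is torsion-free from this term precisely when $\overline H_*(\hat\G)$ is free and concentrated in degree $\dim\hat\G$ (first bullet of Definition~\ref{d:cm}); a maximal simplex $\sigma$ has empty link and so contributes $\mathbb Z\otimes\mathbb Z[A_\G/A_\sigma]$ in degree $k=|\sigma|=\dim\sigma+1$ only, an obstruction exactly when $\dim\sigma<\dim\hat\G$ (second bullet); and a non-maximal nonempty $p$-simplex $\sigma$ contributes $\overline H^{\,k-p-2}(\lk_{\hat\G}\sigma)$, which vanishes for $k<n$ and is torsion-free for $k=n$ exactly when $\overline H_*(\lk_{\hat\G}\sigma)$ is free and concentrated in degree $\dim\hat\G-\dim\sigma-1$ (third bullet). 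Thus the Bieri--Eckmann conditions hold if and only if $\hat\G$ is Cohen--Macaulay, which is the theorem.

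The reduction and this final bookkeeping are routine once the displayed decomposition is in hand; the real work — and the step I expect to be the main obstacle — is the geometric computation of $H^*_c(\widetilde{S_\G})$: setting up the combinatorial Morse function on the cube complex, pinning down the homotopy type of the complement of a combinatorial ball, and tracking both the link attached to each sector at infinity and its multiplicity, so as to obtain the decomposition with the correct degree shifts and coefficient modules (the empty-simplex term being the fiddliest). An alternative route, closer to the one Brady and Meier \cite{BM} take, bypasses the explicit formula and shows directly, by induction on links, that $\widetilde{S_\G}$ is $(\dim\hat\G-1)$-acyclic at infinity with torsion-free top reduced cohomology at infinity if and only if $\hat\G$ is Cohen--Macaulay, using that the vertex links of $\widetilde{S_\G}$ are octahedralisations of $\hat\G$ and that octahedralisation both preserves and reflects Cohen--Macaulayness. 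Either way, this connectivity-at-infinity input is the crux.
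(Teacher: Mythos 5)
The paper does not prove this statement: it is quoted verbatim as Theorem~C of Brady--Meier \cite{BM}, so there is no internal proof to compare against. Judged on its own terms, your outline follows the standard modern route rather than Brady--Meier's original one: you combine the Bieri--Eckmann criterion (a type-FP group with $\cd G=n$ is a duality group if and only if $H^k(G;\Z G)$ vanishes for $k<n$ and is torsion-free for $k=n$) with an explicit computation of $H^*_c$ of the universal cover of the Salvetti complex. The displayed direct-sum decomposition is correct --- it is the theorem of Jensen and Meier on the cohomology of RAAGs with group ring coefficients (Bull.\ LMS 37 (2005)), also recoverable from Davis's computations for graph products --- and your degree bookkeeping against Definition~\ref{d:cm} checks out: the empty simplex recovers the first bullet, a maximal $p$-simplex with $p<\dim\hat\G$ produces a nonzero summand of $H^{p+1}$ with $p+1<n$ (second bullet), and non-maximal simplices give exactly the link conditions (third bullet), with the passage between reduced homology (as in the definition) and reduced cohomology (as in your formula) of the finite complexes $\lk(\sigma)$ handled by universal coefficients since these links have no cells above the relevant degree.

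The one genuine gap is the one you flag yourself: the central decomposition of $H^*(A_\G;\Z A_\G)$ is asserted, not derived, and it carries essentially all the content of the theorem. ``Running a combinatorial Morse theory / Mayer--Vietoris spectral sequence argument'' is not yet a proof; you would need to justify why the filtration by combinatorial balls yields a direct sum (rather than extensions) with precisely the coefficient modules $\Z[A_\G/A_\sigma]$ and those degree shifts, and the empty-simplex and maximal-simplex terms need separate care. To close this, either cite Jensen--Meier directly, or follow the route Brady and Meier actually take: an induction on vertex links showing that the universal cover of the Salvetti complex has the required acyclicity at infinity if and only if $\hat\G$ is Cohen--Macaulay, using that each vertex link is the octahedralization of $\hat\G$ and that octahedralization preserves and reflects the Cohen--Macaulay property. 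Everything else in your write-up is routine once that input is in place.
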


A group $G$ is a \emph{virtual duality group} if some finite-index subgroup $H$ of $G$ is a duality group. In this case, $H$ is torsion-free, and every finite-index torsion-free subgroup of $G$ is also a duality group. This is well-known and follows directly from results in \cite{BE} but as it is important in what follows we record it below.

\begin{lemma}\label{l:fi}
If $G$ is a virtual duality group and $H$ is a finite-index, torsion-free subgroup of $G$, then $H$ is a duality group.
\end{lemma}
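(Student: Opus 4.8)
The plan is to deduce this from two facts about duality groups that are established in \cite{BE}: first, that a group of type $FP$ which is a virtual duality group is itself a duality group precisely when its cohomological dimension and virtual cohomological dimension agree and the dualizing module behaves well under restriction; and second, and more directly useful here, that passing to finite-index subgroups preserves the property of being a duality group and the dimension. So I would start by letting $K \leq G$ be the finite-index subgroup which is a duality group of dimension $n$; by definition $K$ is torsion-free and of type $FP$, hence $G$ is of type $FP_\infty$ over $\mathbb{Q}$ and has finite virtual cohomological dimension $n$. The torsion-free subgroup $H$ in the statement is also of finite index in $G$, so $\cd(H) = \vcd(G) = n$ as well (using Serre's theorem that torsion-free groups of finite vcd have $\cd$ equal to the vcd).

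Next I would invoke the intersection trick: $H \cap K$ is a finite-index subgroup of both $H$ and $K$. Since $K$ is a duality group of dimension $n$ and $H \cap K$ has finite index in $K$, the result of Bieri--Eckmann that a finite-index subgroup of a duality group of dimension $n$ is again a duality group of dimension $n$ shows that $H \cap K$ is a duality group of dimension $n$. So now I am reduced to the following cleaner assertion: if $H$ is a group with a finite-index subgroup $L = H \cap K$ that is a duality group of dimension $n$, and $H$ itself is torsion-free with $\cd(H) = n$, then $H$ is a duality group of dimension $n$.

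For this last step I would use the characterization of duality groups in terms of the cohomology with group-ring coefficients: a group $H$ of type $FP$ is a duality group of dimension $n$ if and only if $H^i(H; \mathbb{Z}H) = 0$ for $i \neq n$ and $H^n(H; \mathbb{Z}H)$ is torsion-free as an abelian group (this is \cite{BE}, and the dualizing module is then $D = H^n(H;\mathbb{Z}H)$). Because $L$ has finite index in $H$, there is an isomorphism of $\mathbb{Z}L$-modules $\mathbb{Z}H \cong \mathbb{Z}L^{[H:L]}$, so $H^i(H; \mathbb{Z}H) \cong H^i(L; \mathbb{Z}H) \cong H^i(L; \mathbb{Z}L)^{[H:L]}$ by Shapiro's lemma. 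Since $L$ is a duality group of dimension $n$, the right-hand side vanishes for $i \neq n$ and is torsion-free for $i = n$; hence the same holds for $H^i(H; \mathbb{Z}H)$, and $H$ is a duality group of dimension $n$. One also needs $H$ to be of type $FP$, which follows since $L$ is of type $FP$ (being a duality group) and $L$ has finite index in $H$.

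The main obstacle is bookkeeping rather than anything deep: one must be careful that all the relevant finiteness hypotheses ($FP$, finite vcd, torsion-freeness) are genuinely inherited through the chain $G \supseteq K$, $G \supseteq H$, $H \supseteq H \cap K \subseteq K$, and one must cite the correct statements from \cite{BE} (in particular the Shapiro-type argument and the fact that being $FP$ and being a duality group of a given dimension pass to and from finite-index subgroups). Since the statement only asserts that $H$ is a duality group without specifying the dimension, one could even shortcut by combining the finite-index-passage result of Bieri--Eckmann in one direction ($K \rightsquigarrow H \cap K$) with its converse ($H \cap K \rightsquigarrow H$), which is the route sketched above; either way the content is entirely contained in \cite{BE} and the write-up should be short.
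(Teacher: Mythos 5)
Your proposal is correct and follows essentially the same route as the paper: intersect $H$ with the duality subgroup $K$, pass down to $H\cap K$ by Bieri--Eckmann's finite-index subgroup theorem, and pass back up to the torsion-free finite-index overgroup $H$ by its converse (the paper simply cites \cite[Theorems~3.2 and 3.3]{BE} for these two steps, which is the ``shortcut'' you mention at the end). One small correction to your expanded Shapiro argument: the restriction map $H^i(H;\mathbb{Z}H)\to H^i(L;\mathbb{Z}H)\cong H^i(L;\mathbb{Z}L)^{[H:L]}$ is not an isomorphism in general (already for $\mathbb{Z}\supset 2\mathbb{Z}$ with $i=1$ the two sides are $\mathbb{Z}$ and $\mathbb{Z}^2$); the correct statement is $H^i(H;\mathbb{Z}H)\cong H^i(L;\mathbb{Z}L)$ via $\mathbb{Z}H\cong\mathrm{Coind}_L^H\,\mathbb{Z}L$, which still yields your conclusion since the vanishing and torsion-freeness criteria are insensitive to this multiplicity.
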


\begin{proof}
As $G$ is a virtual duality group, there exists a finite-index subgroup $H_0$ of $G$ that is a duality group. Let $H'=H\cap H_0$. As $H'$ is finite-index in $H_0$, it is also a duality group by \cite[Theorem~3.2]{BE}. By \cite[Theorem~3.3]{BE}, any torsion-free, finite-index overgroup of a duality group is also a duality group. As $H'$ is also finite-index in $H$, it follows that $H$ is a duality group.
\end{proof}

\subsection{Finiteness conditions for $\out(A_\G)$ and the Join Lemma} \label{s:join-lemma}

In this section, we show that for our example graph in Figure~\ref{fig} the group $\out(A_\G)$ has a finite index subgroup isomorphic to $A_{\Gamma_1} \oplus A_{\Gamma_2}$. For experts, this is the subgroup of $\out(A_\G)$ generated by partial conjugations, and it is finite index as $\G$ is chosen in a way so that $\out(A_\G)$ contains no transvections. We break this down into two steps, starting with conditions that describe when $\out(A_\G)$ is finite:

\begin{proposition}[\cite{CF}, Section~6]\label{p:finite}
Let $\G$ be a finite graph. The group $\out(A_\G)$ is finite if and only if for each vertex $u$:
\begin{itemize}
\item any two vertices in $\G - \st(u)$ are connected by a path in $\G - \st(u)$, and 
\item if $\lk(u) \subset \st(v)$ for some vertex $v$ then $u=v$.
\end{itemize}
\end{proposition}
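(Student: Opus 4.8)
The plan is to derive the proposition from the Servatius--Laurence description of a finite generating set for $\aut(A_\G)$: this group is generated by the \emph{inner automorphisms}, the \emph{graph symmetries} of $\G$, the \emph{inversions} $v\mapsto v^{-1}$, the \emph{transvections} $\tau_{uv}\colon u\mapsto uv$ (which are automorphisms exactly when $u\ne v$ and $\lk(u)\subseteq\st(v)$), and the \emph{partial conjugations} $\pi_{u,C}$, which conjugate the vertices in a connected component $C$ of $\G-\st(u)$ by $u$ and fix every other generator. Two preliminary observations carry most of the weight. First, the graph symmetries and inversions together generate a finite subgroup of $\aut(A_\G)$ (a subgroup of the signed permutations of $V(\G)$), so its image in $\out(A_\G)$ is finite. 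Second, the two displayed conditions translate exactly into statements about these generators: condition (i) says that $\G-\st(u)$ is connected or empty for every vertex $u$, equivalently that every partial conjugation $\pi_{u,C}$ either is trivial or equals conjugation by $u$, hence is inner; and condition (ii) says that the domination relation ``$\lk(u)\subseteq\st(v)$'' holds only when $u=v$, equivalently that there are no nontrivial transvections.

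Granting this, the ``if'' direction is immediate: under both conditions the Servatius--Laurence generating set consists of inner automorphisms, graph symmetries, and inversions, so $\out(A_\G)$ is a quotient of the finite group generated by the symmetries and inversions, hence finite. For ``only if'' I would argue by contraposition, producing an infinite-order element of $\out(A_\G)$ whenever a condition fails. If (ii) fails there is a nontrivial transvection $\tau_{uv}$; since inner automorphisms act trivially on the abelianization, $\out(A_\G)$ acts on $H_1(A_\G;\Z)\cong\Z^{V(\G)}$, and $\tau_{uv}$ acts there as an elementary transvection matrix of infinite order, so $\tau_{uv}$ has infinite order in $\out(A_\G)$. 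If (i) fails, fix $u$ with $\G-\st(u)$ disconnected, choose a component $C$, and set $\pi=\pi_{u,C}$. Then $\pi$ has infinite order in $\aut(A_\G)$ because it sends $c\in C$ to $u^{-n}cu^{n}\ne c$, and I claim that no nonzero power of $\pi$ is inner. Indeed, if $\pi^{n}$ is conjugation by some $g\in A_\G$, then $g$ centralizes $A_\Delta$, where $\Delta$ is the full subgraph of $\G$ spanned by the vertices not in $C$, while $gu^{-n}$ centralizes $A_C$; using that the centralizer of a vertex $v$ equals $A_{\st(v)}$, that $A_{P}\cap A_{Q}=A_{P\cap Q}$ for standard parabolics, and the retraction $A_\G\to A_\Delta$ killing the vertices of $C$, one finds that $u^{-n}$ would have to lie in a standard parabolic subgroup not containing $u$, which is impossible.

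The routine ingredients are the homological argument for transvections and the translation of the hypotheses into properties of the graphs $\G-\st(u)$ and of the domination relation. The main obstacle is the partial-conjugation case of the ``only if'' direction, where one must promote ``$\pi$ has infinite order in $\aut(A_\G)$'' to ``$\pi$ has infinite order in $\out(A_\G)$''; the cleanest route is the centralizer computation just sketched. (Alternatively one can realize $\pi$ as a twist automorphism of the amalgamated splitting $A_\G=A_{\Delta'}\ast_{A_{\st(u)}}A_{\Delta}$, where $\Delta'$ is the full subgraph of $\G$ spanned by $\st(u)\cup C$, and invoke the standard fact that such twists have infinite order in $\out$, the splitting being essential and $u$ non-central in $A_{\Delta'}$.) The argument as a whole rests on the Servatius--Laurence generating theorem as its substantive external input.
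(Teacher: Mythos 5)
The paper offers no proof of this proposition: it is quoted directly from [CF, Section~6], so there is no in-text argument to compare against. Your proof is essentially the standard one from the literature and is correct: the Laurence--Servatius generating set reduces everything to transvections and partial conjugations; a nontrivial transvection has infinite order in $\out(A_\G)$ because it acts as an elementary matrix on $H_1(A_\G;\Z)$ while inner automorphisms act trivially; and a partial conjugation $\pi_{u,C}$ with $\G-\st(u)$ disconnected has no nonzero inner power by the centralizer computation. One small streamlining of your last step: rather than retracting onto $A_\Delta$, retract onto $\langle u\rangle$. Since there is a second component $C'$ of $\G-\st(u)$, the vertex $u$ lies neither in $S=\bigcap_{v\notin C}\st(v)$ (as $u$ is adjacent to nothing in $C'$) nor in $T=\bigcap_{c\in C}\st(c)$, so the retraction kills both $g\in A_S$ and $u^{-n}g\in A_T$ and immediately forces $u^{-n}=1$, i.e.\ $n=0$. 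This avoids having to argue about products of two parabolics.
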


Going back in the other direction, finiteness of the outer automorphism group $\out(A_\G)$ imposes the following restrictions on the graph $\G$ and its associated RAAG.

\begin{lemma}\label{l:graph_stuff}
Let $\G$ be a graph such that $\out(A_\G)$ is finite. Then
\begin{itemize}
\item $\G$ is connected, so that $A_\G$ is either cyclic or one-ended.
\item If $\G$ is not a single point (so that $A_\G$ is noncyclic), the centre $Z(A_\G)$ of $A_\G$ is trivial.
\end{itemize}
\end{lemma}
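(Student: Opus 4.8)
The plan is to obtain both assertions as contrapositives of the Charney--Farb finiteness criterion recorded in Proposition~\ref{p:finite}, combined with the two structural facts about RAAGs collected above: that $A_\G$ is one-ended precisely when $\G$ is connected with at least two vertices, and that $Z(A_\G)$ is generated by those vertices adjacent to every other vertex. Throughout I assume $\G$ has at least one vertex (the empty graph being degenerate).

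For connectedness I would argue by contradiction: suppose $\G$ has at least two vertices but is disconnected, and in every case produce distinct vertices $u\neq v$ witnessing a failure of one of the two conditions of Proposition~\ref{p:finite}, contradicting finiteness of $\out(A_\G)$. If $\G$ has an isolated vertex $u$, then $\lk(u)=\emptyset\subseteq\st(v)$ for any other vertex $v$, so the second condition fails. Otherwise every component has at least two vertices. If some vertex $u$ lies in a component $C$ with $C\not\subseteq\st(u)$, choose $x\in C\setminus\st(u)$ and any vertex $y$ of a different component; then $x$ and $y$ lie in distinct components of the full subgraph $\G-\st(u)$, so the first condition fails. If no such vertex exists, then every vertex is adjacent to all other vertices of its component, i.e.\ each component is a complete graph; choosing distinct $u,v$ in one such component gives $\st(v)=C$ as a full subgraph, which contains $\lk(u)$ (on vertex set $C\setminus\{u\}$), so the second condition fails. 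Once $\G$ is known to be connected, the recorded one-endedness fact yields the stated dichotomy: a single vertex gives $A_\G\cong\Z$, and otherwise $A_\G$ is one-ended.

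For the statement about the centre I would invoke part~(1): $\G$ is connected, and since it is not a single point it has at least two vertices. Suppose $Z(A_\G)$ is nontrivial. By the recorded description of the centre there is a vertex $v$ adjacent to every other vertex, so $\st(v)=\G$. Picking any vertex $u\neq v$, which exists, we get $\lk(u)\subseteq\G=\st(v)$ with $u\neq v$, again contradicting the second condition of Proposition~\ref{p:finite}. Hence $Z(A_\G)$ is trivial.

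The only mildly delicate point is the case division in the disconnected case: the ``first condition'' failure is the natural tool when closed stars do not exhaust their components, but the degenerate situation in which every component is complete (in particular a component with exactly two vertices) instead forces a ``second condition'' failure. Beyond this bookkeeping there is nothing deep, and no further input beyond Proposition~\ref{p:finite} and the two structural facts is needed.
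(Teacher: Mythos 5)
Your proposal is correct and follows essentially the same route as the paper: both parts are obtained by contradiction from the two conditions of Proposition~\ref{p:finite}, using the same structural facts about one-endedness and the centre of a RAAG. The only difference is bookkeeping --- the paper first deduces from the first condition that $\G$ has at most two components with each star equal to its component before invoking the second condition, whereas you case-split directly on isolated vertices, non-complete components, and complete components --- but the contradictions produced are the same.
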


\begin{proof}
Let $\G$ be a graph such that $\out(A_\G)$ is finite. The first condition of Proposition~\ref{p:finite} implies that $\G$ has at most two connected components, and if there are exactly two connected components then the star of each vertex $u$ is equal to its own component. Suppose there are exactly two components $C_1$ and $C_2$ such that for each $u \in C_i$ we have $\st(u)=C_i$. This contradicts the second bullet point from Proposition~\ref{p:finite}: either we can find two distinct vertices $u$ and $v$ in the same component that satisfy $\lk(u) \subset \st(v)$, or there is an isolated vertex $u$ whose link is empty and therefore contained in the star of every other vertex. Hence $\G$ is connected.  Furthermore, if there are at least two vertices in $\G$ then $Z(A_{\G})$ must be trivial,  otherwise the star of some vertex is the whole graph and we would have a contradiction to the second bullet point from Proposition~\ref{p:finite}.
\end{proof}

The second proposition we use is a bit more general, and follows from Guirardel and Levitt's work on automorphism groups of free products \cite{GL2}.

\begin{proposition}\label{p:product}
Let $A$ and $B$ be one-ended groups with centres denoted $Z(A)$ and $Z(B)$, respectively. If $A$ and $B$ have finite outer automorphism groups, then $\out(A \ast B)$ has a finite-index subgroup isomorphic to \[ A/Z(A) \oplus B/Z(B). \]
\end{proposition}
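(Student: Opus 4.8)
The plan is to produce the required finite-index subgroup as the \emph{group of twists} of the free splitting $A\ast B$, using Guirardel and Levitt's exact sequence for automorphism groups of free products \cite{GL2}. First I would pass from $\out(A\ast B)$ to the subgroup $\out^{0}(A\ast B)$ of outer automorphisms preserving the conjugacy classes $[A]$ and $[B]$ \emph{individually}. Since $A$ and $B$ are one-ended, they are freely indecomposable and not infinite cyclic, so $A\ast B$ is already its Grushko decomposition and every automorphism permutes the set $\{[A],[B]\}$; hence $\out^{0}(A\ast B)$ has index $1$ or $2$ in $\out(A\ast B)$ (index $2$ can occur only when $A\cong B$). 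One then has a well-defined restriction homomorphism $\rho\colon\out^{0}(A\ast B)\to\out(A)\times\out(B)$ — conjugate a representative so that it preserves $A$, resp.\ $B$, exactly, then restrict — and $\rho$ is surjective, since extending an automorphism of $A$ by the identity on $B$ (and symmetrically) produces preimages of $\out(A)\times 1$ and $1\times\out(B)$.

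Next I would invoke \cite{GL2}, applied to the graph-of-groups decomposition of $A\ast B$ given by a central vertex carrying the trivial group, joined by two trivial-edge-group edges to vertices carrying $A$ and $B$. Their results identify the kernel of $\rho$ with the group of twists $\mathcal{T}$ of this splitting, giving an exact sequence
\[ 1\longrightarrow\mathcal{T}\longrightarrow\out^{0}(A\ast B)\xrightarrow{\ \rho\ }\out(A)\times\out(B)\longrightarrow 1. \]
Because $\out(A)$ and $\out(B)$ are finite, $\mathcal{T}$ has finite index in $\out^{0}(A\ast B)$, hence in $\out(A\ast B)$. It therefore remains to identify $\mathcal{T}$ with $A/Z(A)\oplus B/Z(B)$, and I expect this to be where the real work lies.

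Concretely, $\mathcal{T}$ is generated by the two families of partial conjugations: $\tau^{B}_{a}$ (fix $A$ pointwise, send $b\mapsto aba^{-1}$ for $a\in A$) and $\tau^{A}_{b}$ (fix $B$ pointwise, conjugate $A$ by $b\in B$). I would then check three things. (i) The maps $a\mapsto[\tau^{B}_{a}]$ and $b\mapsto[\tau^{A}_{b}]$ are homomorphisms $A\to\out(A\ast B)$ and $B\to\out(A\ast B)$ with kernels \emph{exactly} $Z(A)$ and $Z(B)$; the only nonformal point is that $Z_{A\ast B}(A)=Z(A)$, which holds because $A$ fixes a unique vertex of the Bass--Serre tree of the splitting and, not being virtually cyclic, fixes no end, so anything commuting with $A$ fixes that vertex and hence lies in $A$. (ii) Modulo inner automorphisms, conjugating $A$ by an element of $A$ agrees with conjugating $B$ by that same element, so the two families together generate $\mathcal{T}$, and their images are subgroups isomorphic to $A/Z(A)$ and $B/Z(B)$ that commute elementwise. (iii) These two subgroups of $\out(A\ast B)$ intersect trivially, by a normal-form argument in $A\ast B$ parallel to the centralizer computation in (i). Together (i)--(iii) give $\mathcal{T}\cong A/Z(A)\times B/Z(B)=A/Z(A)\oplus B/Z(B)$.

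The main obstacle is the combination of (ii) and (iii): pinning the abstract group of twists down to these concrete partial conjugations and controlling both the centralizers and the intersection inside $A\ast B$ — precisely the steps where one-endedness of $A$ and $B$ is used (it rules out the two-ended behaviour, e.g.\ the case $A\cong B\cong\Z$, where $\out(A\ast B)=\gl_2(\Z)$ is not even virtually of this form). A shortcut, if one prefers, is to read off the isomorphism type of $\mathcal{T}$ directly from the explicit description of the group of twists in \cite{GL2} and only verify that, for one-ended factors, the centralizers occurring there reduce to the centres $Z(A)$ and $Z(B)$.
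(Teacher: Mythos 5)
Your proposal follows essentially the same route as the paper's (sketch) proof: pass to the index-at-most-two subgroup $\out^0(A\ast B)$ preserving the Grushko factors, invoke Guirardel--Levitt's exact sequence over $\out(A)\oplus\out(B)$, and identify the kernel with $A/Z(A)\oplus B/Z(B)$, which is finite index once the outer automorphism groups of the factors are finite. The only difference is that you spell out the identification of the group of twists with $A/Z(A)\oplus B/Z(B)$ (correctly, via the centralizer computation $Z_{A\ast B}(A)=Z(A)$ for one-ended $A$), whereas the paper delegates this to the cited references.
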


\begin{proof}[Sketch proof]
As both $A$ and $B$ are one-ended, $G=A\ast B$ is the \emph{Grushko decomposition} of $G$. In this case, the \emph{Outer space of the free product} (see \cite{GL2}) reduces to a single point: the Bass--Serre tree given by the splitting $A\ast B$ is invariant under the whole of $\out(G)$. By looking at the stabilizer of this tree (\cite[Section 5]{GL2} or alternatively \cite{BJ, Levitt05}) one obtains a subgroup $\out^0(G)$ of $\out(G)$ of index at most two that splits as the following short exact sequence:
\[ 1 \to A/Z(A) \oplus B/Z(B) \to \out^0(G) \to \out(A) \oplus \out(B) \to 1. \]
When both $\out(A)$ and $\out(B)$ are finite, the kernel of this exact sequence is finite-index in $\out(G)$.
\end{proof}

Combining the above allows us to prove the Join Lemma from the introduction:

\begin{proof}[Proof of the Join Lemma]
Let $\G=\G_1 \sqcup \G_2$ be the disjoint union of two graphs with the property that their associated RAAGs $A_{\G_i}$ are noncyclic and have finite outer automorphism groups.  By Lemma~\ref{l:graph_stuff}, both $A_{\G_1}$ and $A_{\G_2}$ are one-ended and have trivial centres. As $A_\G \cong A_{\G_1} \ast A_{\G_2}$, Proposition~\ref{p:product} tells us that  $\out(A_\G)$ has a finite index subgroup isomorphic to $A_{\G_1} \oplus A_{\G_2}$.
\end{proof}

Applying the Join Lemma to our specific example, we have:

\begin{lemma}\label{lemma:out}
Let $\out(A_\G)$ be the graph given in Figure~\ref{fig}. Then $\out(A_\G)$ has a finite index subgroup isomorphic to $A_{\G_1} \oplus A_{\G_2}$.
\end{lemma}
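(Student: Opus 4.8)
The plan is to apply the Join Lemma (Lemma~\ref{l:join}) to the two components of $\G = \G_1 \sqcup \G_2$. To do this I need to check its two hypotheses for each $\G_i$: that $A_{\G_i}$ is noncyclic, and that $\out(A_{\G_i})$ is finite. The first is immediate, since each $\G_i$ in Figure~\ref{fig} has more than one vertex, so $A_{\G_i}$ is neither trivial nor infinite cyclic. The substance of the proof is the second hypothesis. (Here one reads the graphs $\G_1,\G_2$ off Figure~\ref{fig} ignoring the grey triangles, which belong only to the flag complex $\hat{\G}$.)

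To verify that $\out(A_{\G_i})$ is finite I would invoke Proposition~\ref{p:finite}, which reduces the claim to a finite, purely combinatorial check on the graph: for every vertex $u$ of $\G_i$, (a) the induced subgraph $\G_i - \st(u)$ is connected whenever it is nonempty, and (b) no vertex is strictly dominated, i.e. $\lk(u) \subseteq \st(v)$ forces $u = v$. Since $\G_1$ and $\G_2$ are explicit small graphs, this amounts to running through their vertices one at a time: for (a), deleting the closed star of each vertex and observing that what remains is still path-connected; for (b), listing $\lk(u)$ for each vertex and checking it is not contained in $\st(v)$ for any other vertex $v$. Both graphs in Figure~\ref{fig} were constructed precisely so that these two conditions hold — in particular each $\G_i$ has no separating star and no two vertices with nested links.

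The final step is a direct appeal to the Join Lemma: with $A_{\G_1}$ and $A_{\G_2}$ noncyclic and $\out(A_{\G_1})$, $\out(A_{\G_2})$ finite, Lemma~\ref{l:join} yields a finite-index subgroup of $\out(A_\G)$ isomorphic to $A_{\G_1} \oplus A_{\G_2}$, which is exactly the assertion.

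I expect the only genuine work — and the most likely place for a slip — to be the case analysis for condition (b) of Proposition~\ref{p:finite}: domination failures are easy to overlook, especially for a vertex of small valence whose link might accidentally sit inside another vertex's star, so I would tabulate $\lk(u)$ and $\st(v)$ over all vertices before drawing the conclusion. Condition (a) is more visual and less error-prone, and no ideas beyond Proposition~\ref{p:finite} and the Join Lemma are required.
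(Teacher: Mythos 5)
Your proposal follows the paper's own argument exactly: verify the hypotheses of the Join Lemma by checking the two conditions of Proposition~\ref{p:finite} vertex-by-vertex on $\G_1$ and $\G_2$, then apply the Join Lemma to conclude. The paper likewise gives only a sketch of the combinatorial check (breaking the vertices of $\G_2$ into cases by their position on the hexagon and strings), so your plan is correct and essentially identical in approach.
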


\begin{proof}[Sketch proof]
In order to apply the Join Lemma we check the conditions of Proposition~\ref{p:finite} vertex-by-vertex: that is, for each $u \in \G_i$, the star of $u$ does not separate $\G_i$ and if $\lk(u) \subset \st(v)$ then $u=v$.  For $\G_1$ this is straightforward. For $\G_2$ this is a little harder, but we feel that a line-by-line proof is not beneficial to the paper or the reader. To convince oneself that this holds, we recommend that one looks at the following cases for a vertex $u \in \Gamma_2$: $u$ is a vertex on one of the two strings attached to the hexagon, $u$ is the endpoint of a string, $u$ is one of the two vertices in the middle of the hexagon, and lastly $u$ is one of the two points on the boundary of the hexagon that are not endpoints of a string. These cases cover all vertices in $\G_2$, and in each case the star of $u$ does not separate the graph and the link of $u$ is not contained in the star of any other vertex. 
\end{proof}

\subsection{The proof of Theorem A and the Aut case}

\begin{proof}[Proof of Theorem A]
Let $\G$ be the graph from Figure~\ref{fig}. By Lemma~\ref{lemma:out}, the group $\out(A_\G)$ has a finite-index subgroup isomorphic to $A_{\G_1} \oplus A_{\G_2}$. This is the right-angled Artin group on the graph $\Delta = \G_1 \star \G_2$ formed by taking the join of the two graphs $\G_1$ and $\G_2$. A one-dimensional maximal simplex in $\widehat{\Gamma_2}$ (i.e. an edge on one of the strings) is contained in simplices of dimension at most three in the join $\widehat{\Delta} \cong \widehat{\Gamma_1} \star \widehat{\Gamma_2}$, whereas $\widehat{\Delta} $ is 4-dimensional. As the maximal simplices of $\widehat{\Delta}$ are not of uniform dimension, the flag complex $\widehat{\Delta}$ is not Cohen--Macaulay. Therefore $A_\Delta$ is not a duality group. As \emph{every} finite-index, torsion-free subgroup of a virtual duality group is a duality group (Lemma~\ref{l:fi}), the group $\out(A_\G)$ is not a virtual duality group.
\end{proof}

For completeness, we note that we can obtain a similar result in the Aut case:

\begin{proposition}
If $\G_2$ is the graph given in Figure~\ref{fig}, then $\aut(A_{\Gamma_2})$ is not a virtual duality group.
\end{proposition}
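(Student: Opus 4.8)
The plan is to mimic the argument for $\out(A_\G)$, but applied to $\aut(A_{\Gamma_2})$ directly. The key point is that there should be a relationship of the form $\aut(A_{\Gamma_2}) \cong \out(A_{\Gamma_2'})$ for a suitable graph $\Gamma_2'$ obtained from $\Gamma_2$ by "coning off", i.e. adding a single new vertex joined to everything. Concretely, let $\Gamma_2' = \Gamma_2 \star \{p\}$ be the graph obtained from $\Gamma_2$ by adding one vertex $p$ adjacent to every vertex of $\Gamma_2$; then $A_{\Gamma_2'} \cong A_{\Gamma_2} \times \Z$, and there is a standard isomorphism (see e.g. Charney's survey \cite{Charney}, or the fact that $\aut$ of a group with trivial centre embeds as a finite-index subgroup of $\out$ of the product with $\Z$) between $\aut(A_{\Gamma_2})$ and a finite-index subgroup of $\out(A_{\Gamma_2'})$. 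Since $\out(A_{\Gamma_2})$ is finite (this was checked vertex-by-vertex in the proof of Lemma~\ref{lemma:out}) and $A_{\Gamma_2}$ is one-ended with trivial centre (Lemma~\ref{l:graph_stuff}), Proposition~\ref{p:product} applied to $A_{\Gamma_2'} = A_{\Gamma_2} \ast \Z$ would not quite be the right tool — instead one should use the corresponding statement for $\aut$ directly.

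Here is the cleaner route I would actually take. First recall (or cite from \cite{GL2, Levitt05, CV}) that when $A$ is one-ended with trivial centre and $\out(A)$ is finite, then $\aut(A)$ has a finite-index subgroup isomorphic to $A$ itself — indeed $\inn(A) \cong A/Z(A) = A$ is normal in $\aut(A)$ with quotient $\out(A)$, which is finite. So immediately $\aut(A_{\Gamma_2})$ has a finite-index subgroup isomorphic to $A_{\Gamma_2}$. Then I would invoke the Brady–Meier theorem: $A_{\Gamma_2}$ is a duality group if and only if $\widehat{\Gamma_2}$ is Cohen–Macaulay. But $\widehat{\Gamma_2}$ contains a maximal edge (an edge on one of the strings attached to the hexagon) while also containing $2$-simplices (the grey triangles in the hexagon), so its maximal simplices are not of uniform dimension, hence $\widehat{\Gamma_2}$ is not Cohen–Macaulay, hence $A_{\Gamma_2}$ is not a duality group. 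Finally, by Lemma~\ref{l:fi}, since $A_{\Gamma_2}$ is a finite-index torsion-free subgroup of $\aut(A_{\Gamma_2})$ that is not a duality group, $\aut(A_{\Gamma_2})$ cannot be a virtual duality group.

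The main obstacle — really the only nontrivial point — is justifying that $\aut(A_{\Gamma_2})$ contains $A_{\Gamma_2}$ as a finite-index subgroup, which amounts to two facts: (i) $A_{\Gamma_2}$ has trivial centre (so that $\inn(A_{\Gamma_2}) \cong A_{\Gamma_2}$), and (ii) $\out(A_{\Gamma_2})$ is finite (so that $\inn(A_{\Gamma_2})$ has finite index in $\aut(A_{\Gamma_2})$). Both (i) and (ii) are already established in the course of proving Lemma~\ref{lemma:out} and Lemma~\ref{l:graph_stuff}: the conditions of Proposition~\ref{p:finite} were verified for $\Gamma_2$, giving finiteness of $\out(A_{\Gamma_2})$, and Lemma~\ref{l:graph_stuff} then yields that $A_{\Gamma_2}$ is one-ended with trivial centre. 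Everything else is a direct quotation of Brady–Meier and Lemma~\ref{l:fi}, exactly as in the proof of Theorem~A, with the group $A_{\Gamma_1} \oplus A_{\Gamma_2}$ replaced by the simpler $A_{\Gamma_2}$ and the non-uniform dimension of $\widehat{\Gamma_2}$ witnessed by a string edge versus a hexagon triangle. So the write-up should be only a few lines.
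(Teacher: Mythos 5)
Your ``cleaner route'' is exactly the paper's proof: $\inn(A_{\Gamma_2})\cong A_{\Gamma_2}$ is finite index in $\aut(A_{\Gamma_2})$ because the centre is trivial and $\out(A_{\Gamma_2})$ is finite, and $A_{\Gamma_2}$ is not a duality group since $\widehat{\Gamma_2}$ is not pure, so Lemma~\ref{l:fi} finishes the argument. The opening detour through coning off is unnecessary (and you rightly abandon it), but the final write-up is correct and matches the paper.
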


\begin{proof}
As $\out(A_{\Gamma_2})$ is finite and the centre of $A_{\Gamma_2}$ is trivial, the group of inner automorphisms is finite-index in  $\aut(A_{\Gamma_2})$ and is isomorphic to $A_{\Gamma_2}$. As $\widehat{\Gamma_2}$ is not Cohen--Macaulay (the maximal simplices of $\widehat{\Gamma_2}$ do not all have the same dimension), the group $A_{\Gamma_2}$ is not a duality group, so that $\aut(A_{\Gamma_2})$ is not a virtual duality group.
\end{proof}

\section{Further discussion}

In this section we collect some related questions. Most of these problems have appeared elsewhere previously. Vogtmann gave five questions centred around $\mathcal{O}_\G$ (the RAAG version of outer space) at the end of \cite{Vogtmann}. This is question three:

\begin{question}[Vogtmann, \cite{Vogtmann}]
Is $\out(A_\G)$ a virtual duality group? Is there a bordification of $\mathcal{O}_\G$ which is a hybrid
of the Borel--Serre bordification of the symmetric space $\mathbb{D}_n$ and the Bestvina--Feighn
bordification of Outer space $\CV_n$? If so, is bordified $\mathcal{O}_\Gamma$ highly connected at infinity?
\end{question}

The space $\mathcal{O}_\Gamma$ was recently shown to be contractible in work of Bregman, Charney, and Vogtmann \cite{BCV}, and admits a proper action of $\out(A_\G)$. Our example shows that the classification of when $\out(A_\G)$ is a virtual duality group is a delicate problem. Note that even when duality fails, the behaviour of $\mathcal{O}_\G$  at infinity (often described via a bordification) is a very interesting problem. The outer automorphism group constructed in Theorem~A suggests that as well as expecting pieces of a potential bordification to behave like  the Borel--Serre bordification of symmetric space \cite{BorelSerre} and the Bestvina--Feighn bordification of Outer space \cite{BesF,BSV}, we should expect (bordifications of?) Salvetti complexes to also appear, at least in the geometry if not in the actual construction (the explicit construction of $\mathcal{O}_\Gamma$ is in terms of blow-ups and collapses of certain cubulations of $A_\G$: when $\out(A_\Gamma)$ is virtually $A_\Delta$, we should expect $\mathcal{O}_\Gamma$ to be related to the Salvetti complex of $A_\Delta$, but we do not think this relationship is clear from the definitions).

Recently Br\"uck \cite{Brueck} has constructed an $\out(A_\G)$ complex $X_\Gamma$ which is a hybrid of the free factor complex and the Tits building for $\gl_n(\mathbb{Z})$, although $X_\Gamma$ has larger $\out(A_\G)$-stabilizers than one might hope for (for instance, $X_\Gamma$ is trivial for the graph from our main theorem). Br\"uck showed that these complexes are Cohen--Macaulay, so one hope is that $X_\Gamma$ could be used to reduce problems about duality to the behaviour of $\out(A_\G)$-stabilizers in $X_\G$.

\subsection{Obstructions to duality: Fouxe-Rabinovitch groups}

Recall that if \[ \calg=G_1 \ast G_2 \ast \cdots \ast G_k \ast F_n \] is a (not necessarily maximal) free factor decomposition of a group, the associated \emph{Fouxe-Rabinovitch} group is the subgroup of $\out(G)$ consisting of outer automorphisms $\Phi$ that have representatives $\phi_1, \ldots, \phi_k \in \Phi$ such that each representative $\phi_i$ acts as the identity when restricted to $G_i$. This is written as $\out(G; \calg^t)$.

The \emph{decomposition series} constructed in our work with Day \cite{DW2} break up $\out(A_\G)$ into consecutive quotients that are either free-abelian, $\gl(n,\mathbb{Z})$, or certain Fouxe-Rabinovitch groups. If all the consecutive quotients are virtual duality groups, then so is $\out(A_\G)$ (when working with more general subnormal series one has to be a little bit careful about the passage to finite index subgroups, however here we can use congruence subgroups). This leads us to ask the following question, which also appeared briefly in \cite{DSW}.

\begin{question} \label{q:fr}
Let $\mathcal{G}$ be a free factor decomposition of a RAAG $A_\G$. When is $\out(A_\G; \calg^t)$ a virtual duality group?
\end{question}

For the graph in Figure~\ref{fig}, the RAAG subgroup $A_\Delta$ of $\out(A_\G)$ appears as the Fouxe-Rabinovitch group $\out(A_\G;\{A_{\G_1},A_{\G_2}\}^t)$. Here the failure of duality comes from failure of the factor groups to be duality groups. 

However, we conjecture that there is another possible obstruction. In comparison with known examples in the literature, we expect natural classifying spaces for duality groups to look uniformly of the same dimension as the group (i.e. all maximal simplices/cells are of dimension $d=\cd(G)$). For Fouxe-Rabinovitch groups, classifying spaces of minimal dimension can be obtained as a blow-up of the spine of relative Outer space by replacing each simplex $\sigma$ with a copy of $\sigma \times \textrm{E}\stab(\sigma)$ \cite{DSW} (at least after passing to an appropriate torsion-free f.i. subgroup). However, as we will see below, one can have simplices in relative Outer space of the same dimension whose stabilizers have different (geometric/cohomological) dimensions. As the spine is uniform, the resulting blow-up will not be uniform - some maximal simplices will be of dimension strictly less than $\cd(G)$.

\begin{figure}[ht]
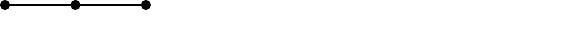
\caption{Three points in the spine of the relative outer space for $A_\G=\mathbb{Z}^2 \ast \mathbb{Z}^3 \ast \mathbb{Z}^4$, whose $\out(A_\G)$-stabilizers are isomorphic to $\mathbb{Z}^2$, $\mathbb{Z}^3$, and $\mathbb{Z}^4$, respectively.}
\label{fig:2}
\end{figure}

The group of \emph{pure symmetric outer automorphisms} (also called basis-conjugating automorphisms) $\pso(A_\G)$ is the subgroup of $\out(A_\G)$ given by all outer automorphisms $\Phi$ whose representatives $\phi\in \Phi$ send every generator $v\in V(\G)$ to a conjugate of itself. For $A_\G=\mathbb{Z}^2 \ast \mathbb{Z}^3 \ast \mathbb{Z}^4$, the group $\pso(A_\G)$ forms a Fouxe-Rabinovitch group. Either using the work in \cite{DW} or by working directly from a presentation, one can show that
\[\pso(\mathbb{Z}^2 \ast \mathbb{Z}^3 \ast \mathbb{Z}^4) \cong \mathbb{Z}^2 \ast \mathbb{Z}^3 \ast \mathbb{Z}^4. \]

This gives another example of a Fouxe-Rabinovitch group that is not a virtual duality group. In this case, $\out(A_\G)$ fits in a short exact sequence \[ 1 \to  \mathbb{Z}^2 \ast \mathbb{Z}^3 \ast \mathbb{Z}^4 \to \out(A_\G) \to \gl_2(\mathbb{Z}) \oplus \gl_3(\mathbb{Z}) \oplus \gl_4(\mathbb{Z})\to 1, \]  so unlike our first example the group $\pso(A_\G)$ is not finite-index in $\out(A_\G)$. It would be interesting to know how this decomposition is reflected in the geometry of the outer space for the RAAG.

The following simplification of Question~\ref{q:fr} is interesting in its own right:

\begin{question}
Let $\calf$ be a free factor system in a free group $F_N$. When is $\out(F_N; \calf^t)$ a virtual duality group?
\end{question}

This is true when $\mathcal{F} = \emptyset$ (by Bestvina and Feighn \cite{BesF}), and in the case where $\mathcal{F}=\mathbb{Z} \ast \mathbb{Z} \ast \cdots \ast \mathbb{Z}$ is the free factor decomposition that determines the pure symmetric automorphism group (by Brady, McCammond, Meier, and Miller \cite{MR1872129}). In both of the above cases, simplex stabilizers behave well in the spine of the (relative) outer space, and the problem illustrated in Figure~\ref{fig:2} does not occur. However, this is not true for an arbitrary free factor system of $F_N$.

\subsection{Commensurability problems}

Given the construction in Theorem~A, it seems worthwhile to repeat the following question, a version of which appeared  as Question~1.1 in \cite{DW}.
\begin{question}\label{q:comm}
When does $\out(A_\G)$ have a finite-index subgroup isomorphic to a right-angled Artin group $A_\Delta$? Conversely, which RAAGs appear as such finite-index subgroups?
\end{question}

One can also ask similar questions up to quasi-isometry. The above problem is discussed at some length in the introduction of \cite{DW}, so we will limit ourselves to mentioning more recent developments. Notably, the work of Aramayona and Martinez--Perez \cite{MR3461054} on when $\out(A_\G)$ can have property (T) has been recently extended by Sale \cite{Sale}. Through this work, as well as Guirardel and Sale's work on \emph{vastness properties} and $\out(A_\G)$ \cite{GS}, we now have much better control over the behaviour of outer automorphism groups of RAAGs that, roughly speaking, do not look like $\out(F_n)$ or $\gl(n,\mathbb{Z})$. These results give reasons to be more optimistic about the tractability of the first part of Question~\ref{q:comm}. The second part of this question seems much harder, given the fact that quasi-isometry and commensurability classification problems for RAAGs themselves are incredibly difficult (see \cite{MR3842063, Margolis}). However, the Join Lemma does provide a way to construct families of examples $A_\Delta$ that are finite index in $\out(A_\G)$ for some $\G$ (and now the work of Wiedmer greatly extends this \cite{Wiedmer2022}). It is also worth noting that \cite[Question~1.2]{DW} gave a more general recognition problem about RAAGs, which was later answered in the negative by Bridson \cite{MR4085052}.

\begin{remark}\label{r:Corey}
Let $\G$ be the example graph in Figure~\ref{fig} and $A_\Delta=A_{\Gamma_1} \times A_{\Gamma_2}$ be the associated finite-index RAAG subgroup of $\out(A_\G)$. Corey Bregman pointed out some extensions to our main example where $\out(A_\G)$ behaves similarly but $\G$ is connected. If $\G'$ is the cone of our example graph with an additional vertex, then $\out(A_{\G'})$ is commensurable with $A_\Delta \times \mathbb{Z}^{|\G|} $ (there is an additional free abelian group generated by transvections by the additional central element and these commute with the existing partial conjugations). Rather than taking the cone, one can take $\G''$ to be the join of $\G$ with two vertices (while working with flag complexes, we can think of this as the suspension of $\G$), in which case $\out(A_{\G''})\cong \out(A_\G) \times \out(F_2)$, so is commensurable with $A_\Delta \times F_2$, as $\out(F_2)$ is virtually free. Further connected examples are given in the appendix.
\end{remark}

\appendix

\section{Computer-assisted construction of further examples, by Benjamin Br\"uck}
In \cite{DW}, Day--Wade give sufficient and necessary conditions for when the group of pure symmetric outer automorphisms $\pso(A_\Gamma)$ is itself a RAAG. This gives another way to find examples of RAAGs whose outer automorphism groups are not  virtual duality groups. In particular, computer calculations that used the conditions of \cite{DW} revealed the two examples depicted in Figure~\ref{fig:9vertex_ex}; both are connected and have only 9 vertices.

\begin{figure}
\begin{center}
\includegraphics[scale=1]{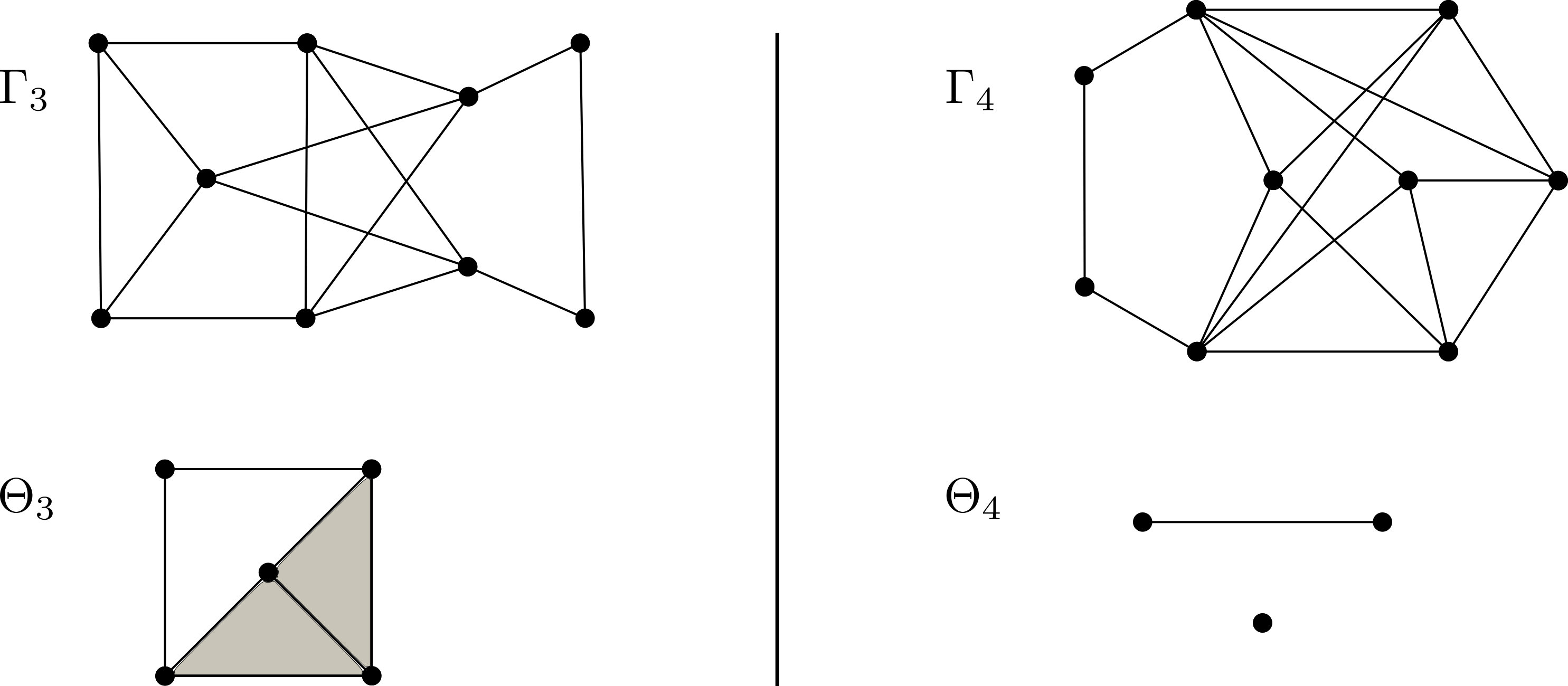}
\end{center}
\caption{Two graphs $\Gamma_i$ with 9 vertices such that $\out(A_{\G_i})$ is not a virtual duality group. The top row shows the defining graphs $\G_i$, the bottom row shows graphs $\Theta_i$ such that $\pso(A_{\G_i})\cong A_{\Theta_i}$.}
\label{fig:9vertex_ex}
\end{figure}

To obtain these examples, one proceeds as follows: If $\out(A_\G)$ contains no transvections (which is equivalent to the second condition of Proposition~\ref{p:finite}), then $\pso(A_\G)$ has finite index in $\out(A_\G)$.
Day--Wade define for a vertex $a\in \G$ a \emph{support graph} $\Delta_a$ that encodes the structure of connected components of $\Gamma-\st(a)$, see \cite[Definition 1.3]{DW}. They show that $\pso(A_\G)$ is isomorphic to a RAAG if and only if for all $a$ this support graph is a forest. Furthermore, if this is the case, they explain how to obtain a graph $\Theta$ such that $\pso(A_\G)\cong A_\Theta$. 
If the flag complex $\hat{\Theta}$ associated to this graph is not Cohen--Macaulay, then $A_\Theta$ is not a duality group. Hence $\out(A_\G)$ cannot be a virtual duality group.

In order to use these arguments for finding explicit examples, we wrote a python script that generates random graphs using the Erd\H{o}s--R\'enyi model with varying numbers of vertices and edge probabilities and follows the steps in the previous paragraph. The script does not actually verify all the conditions for Cohen--Macaulayness but instead just checks whether the corresponding flag complexes are pure, i.e.~whether all the maximal simplices have the same dimension.
A summary of the procedure can be found in Algorithm~\ref{algorithm_duality}.
All of these conditions can easily be checked using simple operations on graphs provided by the python package \texttt{networkx} \cite{SciPyProceedings_11}.
The code is available at \url{https://github.com/benjaminbrueck/computations_for_roars/blob/main/duality_Out(RAAGs).ipynb}.

\begin{algorithm}
\caption{Finding $\G$ such that $\out(\G)$ is not a virtual duality group}
\label{algorithm_duality}
\begin{algorithmic}
\While{not found\_example}
	\State{generate a random graph $\G$}
	\If{$\out(\G)$ has no transvections}
		\State{\textit{(The group $\pso(A_\G)$ is finite index in $\out(A_\G)$.)}}
		\If{every support graph is a forest}
			\State{\textit{(The group $\pso(A_\G)$ is isomorphic to a RAAG $A_\Theta$.)}}
			\State{calculate $\Theta$}
			\If{there are maximal cliques of different size in $\Theta$}
				\State{\textit{(The flag complex $\hat{\Theta}$ is not Cohen--Macaulay, so $A_\Theta$ is not a duality }} 
				\State{\, \textit{group and thus $\out(A_\G)$ is not a virtual duality group.)}}
				\State{found\_example = True}
				\State{\Return $\G$}
			\EndIf
		\EndIf
	\EndIf
\EndWhile
\end{algorithmic}
\end{algorithm}

The two examples in Figure~\ref{fig:9vertex_ex} were obtained using this method. It is not hard to verify by hand that all the support graphs are forests (in fact, none contains more than one edge) and to compute the associated graphs $\Theta_i$. 

These graphs have 9 vertices and 15 and 17 edges, respectively. We believe that they are the examples with the minimal number of vertices that can be obtained using this procedure. 
Computer calculations show that the obstruction above does not appear among the 1,253 simple graphs with at most 7 vertices (as provided by the Atlas of Graphs \cite{RWatlasgraphs}). There are 12,346 graphs on 8 vertices and 274,668 graphs on 9 vertices \cite{OEIS}. For these, we did not have a list available. However, the computer generated $10^6$ random graphs with 8 vertices without finding an example. For 9 vertices, around $10^7$ random graphs were generated and up to isomorphism, the examples presented in Figure~\ref{fig:9vertex_ex} were the only ones that appeared.

Of course, one could also try to use other obstructions in order to find examples where the flag complex $\hat{\Theta}$ is not Cohen--Macaulay. In addition to checking whether $\hat{\Theta}$ is pure, we also looked for disconnected graphs of dimension at least one. However, this did not lead to new findings with 9 or fewer vertices. We doubt that looking for further obstructions to Cohen--Macaulayness would be very helpful as calculations showed that for such small graphs, the dimension of $\hat{\Theta}$ is usually low.

\bibliography{vdbib}
\bibliographystyle{abbrv}

\begin{flushleft}
Richard D.\ Wade\\
Mathematical Institute, University of Oxford\\
Oxford, UK. OX2 6GG\\
\emph{e-mail: }\texttt{wade@maths.ox.ac.uk} 
\end{flushleft}
~
\begin{flushleft}
Benjamin Br\"uck \\
ETH Zurich \\
Department of Mathematics \\
R\"amistrasse 101 \\
8092 Zurich, Switzerland \\
\emph{e-mail: }\texttt{benjamin.brueck@math.ethz.ch}
\end{flushleft}

\end{document}